\documentclass{amsart}

\newif\ifLONGVER
\LONGVERtrue

\newif\ifULTRAMODE
\ULTRAMODEtrue


\usepackage{CJKutf8}
\usepackage[utf8]{inputenc} 
\usepackage{amsthm}
\usepackage{amsfonts}
\usepackage{scalerel}
\usepackage{amssymb}
\usepackage{ mathrsfs }
\usepackage{mathtools}
\usepackage{xcolor}
\usepackage{colonequals}
\usepackage{xfrac}
\usepackage{caption}
\usepackage{environ}
\usepackage{tikz}
\usepackage{adjustbox}
\usepackage{booktabs,tabularx}
\usepackage[normalem]{ulem}
\usepackage[shortlabels]{enumitem}
\usepackage{setspace}
\setstretch{1.38}
\usepackage[hyphens]{url}
\usepackage{hyperref}
\usepackage[hyphenbreaks]{breakurl}
\usepackage[noabbrev]{cleveref}




\usepackage[citestyle=alphabetic,
style=alphabetic,
giveninits=true,
natbib=true,
backend=biber,
]
{biblatex}

\addbibresource{bibliography.bib}

\DeclareSourcemap{
  \maps[datatype=bibtex]{
    \map[overwrite=true]{
      \step[fieldset=url, null]
	  \step[fieldset=doi, null]
	  \step[fieldset=isbn, null]
	  \step[fieldset=issn, null]
    }
  }
}

\begin{filecontents}{bibliography.bib}
@book{DLPhD,
    AUTHOR = {Linkhorn, Deacon V.},
     TITLE = {Monadic {S}econd {O}rder {L}ogic and {L}inear {O}rders},
      NOTE = {Thesis (Ph.D.)--The University of Manchester (United Kingdom)},
 PUBLISHER = {ProQuest LLC, Ann Arbor, MI},
      YEAR = {2021},
     PAGES = {148},
      ISBN = {979-8209-93795-1},
   MRCLASS = {Thesis},
  MRNUMBER = {4419961},
       URL =
              {http://gateway.proquest.com.manchester.idm.oclc.org/openurl?url_ver=Z39.88-2004&rft_val_fmt=info:ofi/fmt:kev:mtx:dissertation&res_dat=xri:pqm&rft_dat=xri:pqdiss:29051746},
}

@book {HodgesBible,
    AUTHOR = {Hodges, Wilfrid},
     TITLE = {Model theory},
    SERIES = {Encyclopedia of Mathematics and its Applications},
    VOLUME = {42},
 PUBLISHER = {Cambridge University Press, Cambridge},
      YEAR = {1993},
     PAGES = {xiv+772},
      ISBN = {0-521-30442-3},
   MRCLASS = {03-01 (03-02 03Cxx)},
  MRNUMBER = {1221741},
MRREVIEWER = {J. M. Plotkin},
       DOI = {10.1017/CBO9780511551574},
       URL = {https://doi.org/10.1017/CBO9780511551574},
}

@incollection {TresslTop,
    AUTHOR = {Tressl, Marcus},
     TITLE = {On the strength of some topological lattices},
 BOOKTITLE = {Ordered algebraic structures and related topics},
    SERIES = {Contemp. Math.},
    VOLUME = {697},
     PAGES = {325--347},
 PUBLISHER = {Amer. Math. Soc., Providence, RI},
      YEAR = {2017},
   MRCLASS = {03C64 (03G10 06B99)},
  MRNUMBER = {3716080},
MRREVIEWER = {Tobias Kaiser},
       DOI = {10.1090/conm/697/14060},
       URL = {https://doi.org/10.1090/conm/697/14060},
}
\end{filecontents}


\newcommand{\lang}[2]{\mscr{L}_{{\scriptscriptstyle #1}}^{{\scriptscriptstyle #2}}}

\newcommand{\bb}{\mathbb}

\newcommand{\mcal}{\mathcal}
\newcommand{\mscr}{\mathscr}

\newcommand{\mfrak}{\mathfrak}

\newcommand{\pow}{\mcal{P}}

\newcommand{\wo}{\setminus}

\newcommand{\upperRomannumeral}[1]{\uppercase\expandafter{\romannumeral#1}}

\makeatletter

\newcommand{\Rmn}[1]{\expandafter\@slowromancap\romannumeral #1@}
\makeatother
\makeatletter
\newcommand{\oset}[3][0ex]{%
  \mathrel{\mathop{#3}\limits^{
    \vbox to#1{\kern-2\ex@
    \hbox{$\scriptstyle#2$}\vss}}}}
\makeatother

\DeclareMathOperator{\suf}{s}
\DeclareMathOperator{\prf}{p}
\newcommand{\ps}{\suf^{-1}}
\newcommand{\pp}{\prf^{-1}}
\newcommand{\ips}{\mfrak{s}^{-1}}

\newcommand{\lcro}{\mathbb{I}}




\newcommand{\langWI}{\lang{\WSO(\lcro)}{}}
\newcommand{\langLI}{\lang{\LCI(\lcro)}{}}

\newcommand{\bdr}[1]{#1^{\partial}}
\newcommand{\cz}{\mathbf{0}}









\DeclareMathOperator{\opp}{op}

\DeclareMathOperator{\At}{At}

\DeclareMathOperator{\fin}{fin}

\DeclareMathOperator{\WSO}{W}

\DeclareMathOperator{\LCI}{L}

\DeclareMathOperator{\bdd}{Bd}

\DeclareMathOperator{\fci}{fci}

\newtheorem{theorem}{Theorem}[section]
\newtheorem{lemma}[theorem]{Lemma}
\newtheorem{proposition}[theorem]{Proposition}
\newtheorem{corollary}[theorem]{Corollary}

\theoremstyle{definition}
\newtheorem{definition}[theorem]{Definition}

\theoremstyle{remark}
\newtheorem{remark}[theorem]{Remark}
\newtheorem{notation}[theorem]{Notation}

\numberwithin{equation}{section}

\setlist[enumerate,itemize]{leftmargin=0.75cm,itemsep=1.5pt,topsep=1.5pt}

\begin{document}

\title[]{Model-completeness for the lattice of finite unions of closed intervals of a dense linear order}

\author[]{Deacon Linkhorn}
\address{The University of Manchester, School of Mathematics, Oxford Road, Manchester, M13 9PL, UK}
\curraddr{}
\email{deacon.linkhorn@manchester.ac.uk}
\thanks{}


\keywords{}

\date{\today}

\dedicatory{}



\maketitle

\vspace{-0.5cm}

\tableofcontents

\vspace{-0.5cm}

\section{Introduction}

Let $\lcro$ be any dense linear order with left endpoint but no right endpoint.
We consider $\pow_{\fci}(\lcro)$, the collection of finite unions of closed intervals of $\lcro$. 
This collection arises naturally in the setting of o-minimality, as precisely the lattice of closed definable sets in any o-minimal expansion of $\lcro$.
Our main result is \cref{Thm-LImc} which says that $\LCI(\lcro)$, the expansion of the lattice $(\pow_{\fci}(\lcro),\cup,\cap)$ by constants for the empty set and $\{0\}$ (here $0$ is the left endpoint of $\lcro$) as well as four unary functions, is model-complete.

The proof makes use of previous results regarding the weak monadic second order theory of $\lcro$ from the authors PhD thesis \cite{DLPhD}. 

\section{Definitions and setup}

Throughout the note we fix a dense linear order $\lcro$, which has a left endpoint but no right endpoint.
By the fact that any completion of the theory of dense linear orders is $\aleph_0$-categorical, and using Proposition 2.7 from \cite{TresslTop}, it follows that our results do not depend the particular choice of $\lcro$.

Our logical notation, where not explicitly defined, is taken from \cite{HodgesBible}. 

\begin{definition}
Let $S$ be a set.
We write $\pow(S)$ for the powerset of $S$.
We write $\pow_{\fin}(S)$ for the collection of finite subsets of $S$.
\end{definition}

\begin{definition}
Let $\alpha$ be any linear order.
A \textbf{closed interval} of $\alpha$ is a subset $S \subseteq \alpha$ of one of the following forms,
\begin{enumerate}
\item $[i,j] \coloneqq \{k \in \alpha: i \leq k \leq j\}$ for $i,j \in \alpha$,
\item $[i,+\infty) \coloneqq \{k \in \alpha: i \leq k\}$ for $i \in \alpha$,
\item $(-\infty,j] \coloneqq \{k \in \alpha: k \leq j\}$ for $j \in \alpha$.
\end{enumerate}
We will write $\pow_{\fci}(\alpha)$ for the set of finite unions of closed intervals of $\alpha$.
\end{definition}

Note that $\pow_{\fin}(\alpha) \subseteq \pow_{\fci}(\alpha) \subseteq \pow(\alpha)$.

\begin{definition}
Let $\alpha$ be a discrete linear order with endpoints (i.e. $\alpha$ is finite, or has order type $\bb{N} + \Gamma \cdot \bb{Z} + \bb{N}^{\opp}$ for some linear order type $\Gamma$).

We will write $\suf_{\alpha}$ for the successor function $\alpha \wo \max(\alpha) \rightarrow \alpha$ and $\prf_{\alpha}$ for the predecessor function $\alpha \wo \min(\alpha) \rightarrow \alpha$.

We will moreover write $\ps_{\alpha}$ for the function $\pow(\alpha) \rightarrow \pow(\alpha)$ sending $A \subseteq \alpha$ to $\{i \in \alpha: \suf_{\alpha}(i) \in A\}$, and $\pp_{\alpha}$ for the function $\pow(\alpha) \rightarrow \pow(\alpha)$ sending $A \subseteq \alpha$ to $\{i \in \alpha: \prf_{\alpha}(i) \in A\}$. 
\end{definition}

\begin{definition}
Let $\alpha$ be any linear order. 
We call $A \in \pow(\alpha)$ \textbf{discrete} if the restriction of $\alpha$ to $A$ is a discrete linear order.

For $A \subseteq \alpha$ discrete we will write $\suf_A$ (respectively $\prf_A$) for the successor (respectively predecessor) function on the restriction of $\alpha$ to a linear ordering on $A$.
Note that every finite subset of $\alpha$ is discrete. 
\end{definition}

\section{\texorpdfstring{$\WSO(\lcro)$}{W(I)}, the weak monadic second order version of \texorpdfstring{$\lcro$}{I}}

Recall that $\lcro$ is a dense linear order with left endpoint but no right endpoint that we fixed at the outset.

\begin{definition}
We write $0$ for the left endpoint of $\lcro$, i.e. the smallest element with respect to the ordering.
We write $\langWI$ for the signature $\{\cup,\cap,\bot,\cz,\min,\max,\ips\}$.\footnote{This comprises in order, two binary function symbols, two constants, two unary function symbols, and a binary function symbol.}
Then $\WSO(\lcro)$ is the $\langWI$-structure with,
\begin{enumerate}
\item universe $\pow_{\fin}(\lcro)$, the collection of finite subsets of $\lcro$,
\item $\cup,\cap$ interpreted as the operations of union and intersection, 
\item $\bot$ interpreted as the empty set,
\item $\cz$ interpreted as $\{0\}$,
\item $\min$ and $\max$ interpreted as the operations taking a non-empty finite set to the singleton containing its minimum and maximum respectively, with respect to the ordering of $\lcro$ (and both fixing $\bot$),
\item $\ips$ being the binary function given by,
\[
\ips(A,B) = \{i \in A:\suf_A(i) \in B\},
\]
i.e. $\ips$ is a single binary function which encodes the (preimage map associated to) the family of successor functions on finite subsets of $\lcro$.
\end{enumerate}
\end{definition}

\begin{notation}
As is conventional, we will sometimes use $X \subseteq Y$ as a shorthand for the formula $X \cap Y = X$.
Note that in doing so we conceal an unnested atomic formula, so this shorthand will never lead us to mistake an existential formula for a quantifier-free formula.
\end{notation}

\begin{lemma}\label{Lemma-notbotelim}
The following are equivalent for each $A \in \pow_{\fin}(\lcro)$,
\begin{enumerate}
\item $A \neq \bot$,
\item $A = \cz$ or $\cz \subseteq \ips(A \cup \cz,A)$.
\end{enumerate}
\end{lemma}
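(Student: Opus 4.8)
The plan is to unwind what the right-hand condition says at the single point $0$ and then run a short case analysis. Writing out the definition of $\ips$ gives
\[
\ips(A \cup \cz, A) = \{i \in A \cup \cz : \suf_{A \cup \cz}(i) \in A\},
\]
so, since $\cz = \{0\}$, the inclusion $\cz \subseteq \ips(A \cup \cz, A)$ is equivalent to asserting that $0 \in A \cup \cz$ (automatic) and that $\suf_{A \cup \cz}(0)$ is defined and lies in $A$. As $0$ is the left endpoint of $\lcro$, it is the minimum of the nonempty set $A \cup \cz$, so $\suf_{A \cup \cz}(0)$ is defined precisely when $A \cup \cz$ has a second element, i.e.\ when $A \not\subseteq \{0\}$.

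The key observation is that, when defined, $\suf_{A \cup \cz}(0)$ automatically belongs to $A$: it is the second-smallest element of $A \cup \cz$, hence strictly larger than $0$, and every element of $A \cup \cz = A \cup \{0\}$ other than $0$ lies in $A$. Consequently $\cz \subseteq \ips(A \cup \cz, A)$ holds if and only if $A \cup \cz$ has at least two elements, i.e.\ if and only if $A \notin \{\bot, \cz\}$.

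Granting this, both implications fall out. For $(1) \Rightarrow (2)$, assume $A \neq \bot$: either $A = \cz$, giving the first disjunct of (2) directly, or $A \notin \{\bot, \cz\}$, in which case the observation yields $\cz \subseteq \ips(A \cup \cz, A)$, the second disjunct. For $(2) \Rightarrow (1)$ I would argue contrapositively: if $A = \bot$ then $A \neq \cz$, while $A \cup \cz = \{0\}$ is a singleton on which $\suf$ is nowhere defined, so $\ips(A \cup \cz, A) = \bot$ and $\cz \not\subseteq \bot$; hence (2) fails. Equivalently, the observation shows that (2) as a whole simplifies to $A \neq \bot$.

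There is no serious obstacle here. The only step needing a moment's care is the degenerate case where $0$ is at once the minimum and the maximum of $A \cup \cz$, so that $\suf_{A \cup \cz}(0)$ is undefined and contributes nothing to $\ips$; this is exactly the excluded case $A = \bot$.
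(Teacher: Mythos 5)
Your proof is correct and follows essentially the same route as the paper's: you unwind the definition of $\ips$ at the point $0$, note that $\suf_{A \cup \cz}(0)$ is defined exactly when $A \not\subseteq \{0\}$ and then automatically lands in $A$, and handle $(2)\Rightarrow(1)$ by the same contrapositive computation $\ips(\cz,\bot)=\bot$. The only difference is presentational: you isolate the sharper observation that the second disjunct of (2) characterises $A \notin \{\bot,\cz\}$ exactly, which the paper leaves implicit.
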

\begin{proof}
$(1) \Rightarrow (2)$:
If $A \neq \bot$ and $A \neq \cz$ then it is immediate that $A \wo \cz$ is not $\bot$.
This gives us that $\suf_{A \cup \cz}(0)$ is defined (i.e. $0 \in \lcro$ has a successor in $\cz \cup A$).
Then $\suf_{A \cup \cz}(0) \in A$ which by definition gives us $\cz \subseteq \ips(A \cup \cz,A)$.

$(2) \Rightarrow (1)$:
Suppose $A = \bot$. 
From this it follows immediately that $A \neq \cz$.
Moreover $\ips(A \cup \cz,A) = \ips(\cz,\bot)$ which is $\bot$, hence $\cz \not\subseteq \ips(A \cup \cz,A)$.
\end{proof}

\begin{definition}
Recall that for sets $A$ and $B$, the \textbf{relative complement} of $B$ inside $A$, denoted by $A \wo B$, is the set $\{i \in A: i \notin B\}$. 
\end{definition}

\begin{proposition}\label{Prop-WIqffelimneg}
Every quantifier-free $\langWI$-formula is equivalent to a positive (i.e. negation free) existential $\langWI$-formula over the the theory of $\WSO(\lcro)$. 
\end{proposition}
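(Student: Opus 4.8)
The plan is to reduce the whole statement to a single task---rewriting a negated equation as a positive existential formula---and then to import \cref{Lemma-notbotelim} to finish. First I would record the relevant closure properties: the class of positive existential $\langWI$-formulas is closed under $\wedge$, $\vee$ and $\exists$ (prenex conjunctions and disjunctions of existential formulas after renaming bound variables, using that the universe is nonempty). Since equality is the only relation symbol, every atomic $\langWI$-formula is an equation $t = s$ between terms, and every quantifier-free formula is a Boolean combination of such equations. Putting such a formula into disjunctive normal form, the positive literals are already positive (indeed quantifier-free), so it suffices to produce, for each negative literal $t \neq s$, an equivalent positive existential formula; the closure properties then reassemble the disjunctive normal form into one positive existential formula.

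Next I would split $t \neq s$ as $t \not\subseteq s \vee s \not\subseteq t$ and, by symmetry, treat only $t \not\subseteq s$, i.e.\ $t \cap s \neq t$, which says that some element of $t$ lies outside $s$. The \emph{key step} is the observation that, in $\WSO(\lcro)$, this holds if and only if there is a nonempty (necessarily finite) set $Z$ with $Z \subseteq t$ and $Z \cap s = \bot$: the forward direction is witnessed by taking $Z$ to be a singleton $\{i\}$ for any $i \in t \wo s$, while the converse is immediate, as any element of such a $Z$ lies in $t$ but not in $s$. Thus, for a fresh variable $Z$,
\[
t \not\subseteq s \quad\Longleftrightarrow\quad \exists Z\,\bigl(Z \neq \bot \ \wedge\ Z \subseteq t \ \wedge\ Z \cap s = \bot\bigr).
\]
The matrix is positive except for the conjunct $Z \neq \bot$, which I would replace using \cref{Lemma-notbotelim} by the positive formula $Z = \cz \vee \cz \subseteq \ips(Z \cup \cz, Z)$. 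Since $Z \subseteq t$ unfolds to the positive atomic equation $Z \cap t = Z$, and $Z \cap s = \bot$ is already positive atomic, the entire right-hand side is positive existential, as required.

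The only real obstacle is this middle step, and its shape is essentially forced. Because $\langWI$ has no complement operation, there is no route to a positive \emph{quantifier-free} equivalent, so the existential quantifier producing a witness set is unavoidable; and detecting nonemptiness of that witness positively is exactly the content that must be imported from \cref{Lemma-notbotelim}, which in turn routes nonemptiness through the successor-encoding function $\ips$. Finally, I would note that it suffices to verify each of these equivalences in the single structure $\WSO(\lcro)$ rather than in an arbitrary model: since $\theory(\WSO(\lcro))$ is complete, the universal closure of such a biconditional lies in the theory precisely when it is true in $\WSO(\lcro)$, which is what the element-chasing above establishes.
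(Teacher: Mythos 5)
Your proof is correct, and the core of it coincides with the paper's: both arguments reduce everything to rewriting a single negated equation, both do so by existentially quantifying a witness set that must be nonempty, and both discharge the nonemptiness condition $Z \neq \bot$ positively via \cref{Lemma-notbotelim}. The one genuine difference is in how the negated equation is decomposed. The paper expresses $q_1 \neq q_2$ as the existence of a nonempty subset of the symmetric difference of $q_1$ and $q_2$, which forces a preliminary step: the relative complement $\wo$ is not in $\langWI$, so the paper first shows that $A \wo B = C$ is positively definable (via $(A \cap B) \cup C = A$ and $B \cap C = \bot$) in order to use terms involving $\wo$. You instead split $t \neq s$ into $t \not\subseteq s \vee s \not\subseteq t$ and witness each non-inclusion by a nonempty $Z$ with $Z \cap t = Z$ and $Z \cap s = \bot$; this keeps every conjunct an honest positive atomic $\langWI$-formula and so avoids the signature extension entirely. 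Your route is marginally more economical; the paper's symmetric-difference route has the mild advantage of producing a single existential witness per negated equation rather than a disjunction of two. Your closing remark that it suffices to check the equivalences in the single structure $\WSO(\lcro)$ is also the right reading of ``over the theory of $\WSO(\lcro)$,'' since that theory is complete.
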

\begin{proof}
First note that for each $A,B \in \pow_{\fin}(\lcro)$, $A \wo B = C$ if and only if,
\[ 
(A \cap  B) \cup C = A \text{ and }B \cap C = \bot.
\]
Therefore every positive existential $(\langWI \cup \{\wo\})$-formula is equivalent to a positive existential $\langWI$-formula (working in $\WSO(\lcro)$, with $\wo$ a binary function symbol interpreted as relative complement).
So it is enough to check that every quantifier-free $\langWI$-formula is equivalent to a positive existential $(\langWI \cup \{\wo\})$-formula.
It is enough to check that the negation of an atomic $(\langWI \cup \{\wo\})$-formula is equivalent to a positive existential $(\langWI \cup \{\wo\})$-formula in $\WSO(\lcro)$.
Atomic formulas are all of the form $q_1(\bar{X}) = q_2(\bar{X})$ for $\langWI$-terms $q_1,q_2$.
Let $\Delta(q_1(\bar{X}),q_2(\bar{X}))$ be shorthand for $(q_1(\bar{X}) \wo q_2(\bar{X})) \cup (q_2(\bar{X}) \wo q_1(\bar{X}))$, i.e. the symmetric difference of $q_1$ and $q_2$.
The formula $\neg (q_1(\bar{X}) = q_2(\bar{X}))$ is equivalent over $\WSO(\lcro)$ to,
\[
\exists Y (Y \neq \bot \wedge Y \subseteq \Delta(q_1(\bar{X}),q_2(\bar{X}))),
\]
but using \cref{Lemma-notbotelim} this in turn is equivalent over $\WSO(\lcro)$ to,
\[
\exists Y ((Y = \cz \vee \cz \subseteq \ips(Y \cup \cz,Y)) \text{ and }Y \subseteq \Delta(q_1(\bar{X}),q_2(\bar{X}))),
\]
which is positive existential as required.
\end{proof}

\begin{theorem}\label{Thm-WIposmodelcomplete}
$\WSO(\lcro)$ is positive-model-complete in the signature $\langWI$.
\end{theorem}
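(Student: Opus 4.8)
The plan is to obtain positive-model-completeness from two ingredients: the negation-elimination already proved in \cref{Prop-WIqffelimneg}, and the ordinary model-completeness of $T \coloneqq \theory(\WSO(\lcro))$, which I would import from the weak monadic second order analysis of $\lcro$ in \cite{DLPhD}. Recall the shape of the goal: a homomorphism between models of $T$ automatically \emph{preserves} positive existential formulas (atomic formulas are equations between $\langWI$-terms, and positive existential formulas are built from these by $\wedge$, $\vee$, $\exists$), so $T$ is positive-model-complete as soon as such homomorphisms also \emph{reflect} them, i.e. are immersions. By the positive analogue of Robinson's test it therefore suffices to show that the negation of every positive existential $\langWI$-formula is again $T$-equivalent to a positive existential formula: granting this, if $f\colon M \to N$ is a homomorphism with $N \models \varphi(f\bar A)$ while $M \models \neg\varphi(\bar A)$, then $\neg\varphi$ being positive existential is preserved upward by $f$, forcing $N \models \neg\varphi(f\bar A)$, a contradiction. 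I will in fact establish the stronger (and cleaner to state) assertion that \emph{every} $\langWI$-formula is $T$-equivalent to a positive existential formula, which in particular covers all negations.

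The reduction is then immediate. By \cref{Prop-WIqffelimneg} every quantifier-free $\langWI$-formula is $T$-equivalent to a positive existential one, so all that is needed beyond this is the ability to pass from an arbitrary formula to an existential formula with quantifier-free matrix; that is exactly model-completeness of $T$. Here I would invoke \cite{DLPhD}: the first-order structure $\WSO(\lcro)$ is precisely the weak monadic second order structure on the finite subsets of $\lcro$, with $\min$, $\max$ and $\ips$ encoding the order and the successor maps on finite sets, and the signature $\langWI$ is chosen so that the model-completeness established there for the weak monadic second order theory of $\lcro$ is exactly the statement that $T$ is model-complete. Granting this, let $\varphi(\bar X)$ be any $\langWI$-formula; model-completeness gives $\varphi \equiv_T \exists \bar Y\, \theta(\bar X,\bar Y)$ with $\theta$ quantifier-free, and \cref{Prop-WIqffelimneg} rewrites $\theta$ as $\exists \bar Z\, \eta(\bar X,\bar Y,\bar Z)$ with $\eta$ positive quantifier-free, whence $\varphi \equiv_T \exists \bar Y\, \exists \bar Z\, \eta$ is positive existential.

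Applying this to $\neg\varphi$ shows that negations of positive existential formulas are positive existential, which is the hypothesis of the test in the first paragraph; hence every homomorphism between models of $T$ is an immersion and $\WSO(\lcro)$ is positive-model-complete. The only genuinely substantial step is the use of model-completeness from \cite{DLPhD} — everything else is bookkeeping layered on top of \cref{Prop-WIqffelimneg}. I expect the main obstacle to be making the translation between the one-sorted structure $\WSO(\lcro)$ and the weak monadic second order theory of $\lcro$ fully faithful, and in particular checking that the thesis's normal-form result can be stated in exactly the signature $\langWI$ rather than in a merely definitionally equivalent larger signature, since model-completeness is not in general preserved when the signature is shrunk. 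If \cite{DLPhD} instead yields quantifier elimination in a suitable expansion, the argument only simplifies: every formula is then $T$-equivalent to a quantifier-free one, and \cref{Prop-WIqffelimneg} finishes directly.
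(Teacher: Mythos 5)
Your proposal is correct and follows essentially the same route as the paper: the paper's proof is exactly the combination of model-completeness of $\WSO(\lcro)$ imported from \cite{DLPhD} with \cref{Prop-WIqffelimneg} to turn the quantifier-free matrix of an existential normal form into a positive existential formula. The extra material on homomorphisms preserving and reflecting positive existential formulas just unpacks the definition of positive-model-completeness that the paper leaves implicit, and is consistent with how \cref{Thm-WIposmodelcomplete} is later used in the proof of \cref{Thm-LImc}.
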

\begin{proof}
That $\WSO(\lcro)$ is model-complete is proved in my thesis \cite{DLPhD} (see theorem 5.4.7 on page 141).
The result then follows immediately from \cref{Prop-WIqffelimneg}.
\end{proof}

\section{\texorpdfstring{$\LCI(\lcro)$}{L(I)}, finite unions of closed intervals of \texorpdfstring{$\lcro$}{I}}

\begin{definition}
We write $\langLI$ for the signature $\{\cup,\cap,\bot,\cz,\min,\max,l,r\}$.\footnote{This comprises in order, two binary function symbols, two constants, and four unary function symbols.}

$\LCI(\lcro)$ is the $\langLI$-structure with,
\begin{enumerate}
\item universe $\pow_{\fci}(\lcro)$, the collection of finite unions of closed intervals of $\lcro$,
\item $\cup,\cap$ interpreted as the operations of union and intersection, 
\item $\bot$ interpreted as the empty set,
\item $\cz$ interpreted as $\{0\}$,
\item $\min$ and $\max$ interpreted as the operations taking an element of $\pow_{\fci}(\lcro)$ to the singleton containing its minimum and maximum respectively, with respect to the ordering of $\lcro$ (we set $\min(\bot) = \max(\bot) = \bot$, and in the case $A \in \pow_{\fci}(\lcro)$ is unbounded we set $\max(A)=\bot$),
\item $l$ and $r$ are interpreted as the operations taking an element of $\pow_{\fci}(\lcro)$ to the set of its left and right endpoints respectively.
\end{enumerate}
\end{definition}

\begin{notation}\label{notationlr}
For $A \in \pow_{\fci}(\lcro)$ we will sometimes write $A_l$ and $A_r$ in place of $l(A)$ and $r(A)$ respectively. We will write $\bdr{A}$ as shorthand for $A_l \cup A_r$.
\end{notation}

\begin{lemma}\label{Lemma-boundeddef}
The bounded elements of $\pow_{\fci}(\lcro)$ form a definable subset in  $\LCI(\lcro)$, which we will denote by $\bdd$.
\end{lemma}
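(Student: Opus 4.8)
The plan is to read a defining formula directly off the definition of $\max$, which was set up precisely to detect unboundedness. Recall that in $\LCI(\lcro)$ we have $\max(A) = \bot$ exactly when $A$ has no largest element, and the only ways this happens are $A = \bot$ or $A$ unbounded above; on every bounded nonempty $A$ the function $\max$ returns the singleton of its maximum, which is nonempty. Moreover, since $\lcro$ has least element $0$, every member of $\pow_{\fci}(\lcro)$ is automatically bounded below, so ``bounded'' here means simply ``bounded above''. Hence $\max$ already encodes exactly the distinction we want, and no auxiliary construction is needed.

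Concretely, I would take
\[
\varphi(x) \coloneqq (x = \bot) \vee \neg(\max(x) = \bot)
\]
and set $\bdd \coloneqq \{A \in \pow_{\fci}(\lcro) : \LCI(\lcro) \models \varphi(A)\}$. The verification that $\varphi$ defines the set of bounded elements is a short three-case check. If $A = \bot$, then $A$ is bounded and the first disjunct holds. If $A \neq \bot$ is bounded, then $A$ has a genuine maximum, so $\max(A)$ is a nonempty singleton and the second disjunct holds. If $A$ is unbounded, then $A \neq \bot$ and, by the convention in the definition of $\LCI(\lcro)$, $\max(A) = \bot$, so both disjuncts fail. Thus $\varphi$ picks out exactly the bounded elements.

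I do not expect a substantial obstacle here; this is a warm-up definability lemma whose only delicacy is bookkeeping at the boundary cases. The two points to pin down are: first, that a finite union of closed intervals is unbounded above if and only if one of its constituent intervals has the form $[i,+\infty)$, so that unboundedness is genuinely equivalent to the absence of a maximum (and hence to $\max$ returning $\bot$ on a nonempty set); and second, the convention $\max(\bot) = \bot$, which is what forces the explicit disjunct $x = \bot$ if the empty set is to count as bounded. Once these are settled, $\varphi$ works as written and the lemma follows.
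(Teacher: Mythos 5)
Your proof is correct and uses exactly the same formula as the paper, namely $(x = \bot) \vee \neg(\max(x) = \bot)$, with the same case analysis on the conventions for $\max$. The paper additionally notes that this makes $\bdd$ quantifier-free definable, which your argument also establishes.
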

\begin{proof}
Under our interpretation, an element $A \in \pow_{\fci}(\lcro)$ is bounded if and only if $A = \bot$ or $\max(A) \neq \bot$. 
Conversely the unbounded elements are precisely those $A \in \pow_{\fci}(\lcro)$ for which $A \neq \bot$ and $\max(A) = \bot$.

Therefore $\bdd$ is in fact a quantifier-free definable set in $\LCI(\lcro)$. 
\end{proof}

\section{Interpreting \texorpdfstring{$\WSO(\lcro)$ in $\LCI(\lcro)$}{W(I) in L(I)}}

\begin{proposition}
The set $\pow_{\fin}(\lcro)$ is quantifier-free definable in $\LCI(\lcro)$.
\end{proposition}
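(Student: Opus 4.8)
The plan is to avoid any appeal to the model-completeness results and instead exhibit an explicit \emph{quantifier-free} $\langLI$-formula that cuts out exactly the finite sets, then verify it semantically. The natural candidate is
\[
\vphi(x)\colon\quad x = l(x),
\]
that is, in the notation of \cref{notationlr}, the condition $A = A_l$. I claim that for every $A \in \pow_{\fci}(\lcro)$ one has $A \in \pow_{\fin}(\lcro)$ if and only if $A = A_l$, so that $\pow_{\fin}(\lcro)$ is the (quantifier-free definable) solution set of $\vphi$. The underlying picture is that any $A \in \pow_{\fci}(\lcro)$ decomposes into finitely many maximal closed intervals (its maximal convex subsets), and $A_l$ records precisely the left endpoints of these; the point is that $A$ is finite exactly when each such interval is a singleton.

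For the forward implication, suppose $A$ is finite. Since $\lcro$ is dense, a closed interval is infinite as soon as it is non-degenerate, so no maximal interval of a finite $A$ can contain two distinct points; hence every maximal interval is a singleton $[i,i] = \{i\}$ and every point of $A$ is a left endpoint. This gives $A = A_l$, the case $A = \bot$ (where $A_l = \bot$) being immediate.

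For the converse I would argue the contrapositive: if $A$ is infinite then $A \neq A_l$. The key structural fact is that, being a finite union of closed intervals, $A$ has only finitely many maximal convex subsets and therefore only finitely many left endpoints, so $A_l \in \pow_{\fin}(\lcro)$ is finite; since $A$ is infinite, $A \neq A_l$ on cardinality grounds. (Concretely, infiniteness of $A$ forces some maximal interval $[i,j]$ with $i < j$, and by density all interior points of $[i,j]$ lie in $A \setminus A_l$.)

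The only step requiring genuine care --- and the closest thing to an obstacle --- is the finiteness of $A_l$, which is exactly where the hypothesis that elements of the universe are \emph{finite} unions of closed intervals is essential, as it bounds the number of maximal intervals and hence of endpoints. Granting this, the two implications yield $\pow_{\fin}(\lcro) = \{A \in \pow_{\fci}(\lcro) : A = A_l\}$, and since $\vphi$ is quantifier-free the proposition follows. I note that either of $x = r(x)$ or $x = \bdr{x}$ would serve equally well in place of $x = l(x)$.
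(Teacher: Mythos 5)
Your proof is correct and takes essentially the same approach as the paper, which simply exhibits the quantifier-free formula $l(X) = r(X)$ (where you use $X = l(X)$; both work for the same reason, namely that a finite set is exactly one all of whose maximal convex components are singletons, while an infinite $A \in \pow_{\fci}(\lcro)$ has only finitely many endpoints). Your verification, in particular the observation that density of $\lcro$ forces non-degenerate closed intervals to be infinite, supplies the justification the paper leaves implicit.
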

\begin{proof}
The formula $l(X) = r(X)$ defines $\pow_{\fin}(\lcro)$ in $\LCI(\lcro)$.
\end{proof}

We will use this as the foundation for our interpretation of $\WSO(\lcro)$ in $\LCI(\lcro)$.

In the remainder of this section we will outline how to define the remaining $\langWI$-structure carried by $\WSO(\lcro)$ within $\LCI(\lcro)$ on the set $\pow_{\fin}(\lcro)$.
 
It will be important for us, when transferring model-completeness from $\WSO(\lcro)$ to $\LCI(\lcro)$ in \cref{Section-transfer}, that existential $\langLI$-formulae are used to do so.

\begin{remark}
If $\phi$ is an unnested atomic formulae in the signature $\langWI \cap \langLI$, i.e. $\{\cup,\cap,\bot,\cz,\min,\max\}$, we have that,
\[
\phi(\WSO(\lcro)) = \phi(\LCI(\lcro)) \cap \pow_{\fin}(\lcro).
\]
As such for unnested atomic formulae in this reduct, we need do nothing when giving the interpretation. 
Here we use unnested atomic $\langLI$-formulas, which a fortiori are existential. 
\end{remark}

All that is left is to produce an $\langLI$-formula which defines $\ips$ in $\LCI(\lcro)$.

\begin{remark}
Let $A,B \in \pow_{\fin}(\lcro)$. 
If $A = \bot$ or $B = \bot$ then $\ips(A,B) = \bot$.
Therefore in defining $\ips$ we can assume that $A,B \neq \bot$.

Moreover we have that $\ips(A,B) = \ips(A,B \cap A)$. 
From this it is clear that it is sufficient to define the relation $\ips(A,B) = C$ in the case where $B \subseteq A$. 
\end{remark}

\begin{proposition}
Let $A,B,C \in \pow_{\fin}(\lcro)$ with $\bot \subsetneq B \subseteq A$. Then $\ips(A,B) = C$ if and only if there exists $D \in \pow_{\fci}(\lcro)$ such that one of the following holds,
\begin{enumerate}
\item $\min(A) \subseteq B$, $l(D) = (B \wo \min(B)) \cup \cz$, $r(D) = C$, and $C \subseteq A \subseteq D$, or,
\item $\min(A) \not\subseteq B$, $l(D) = B \cup \cz$, $r(D) = C$, and $C \subseteq A \subseteq D$.
\end{enumerate} 
\end{proposition}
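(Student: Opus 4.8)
The plan is to prove both implications by working with the decomposition of the witness $D$ into its maximal closed intervals, using throughout the elementary identity
\[
\ips(A,B) \;=\; \{\,\prf_A(b) : b \in B \wo \min(A)\,\},
\]
which holds because $c \in \ips(A,B)$ means exactly $\suf_A(c) \in B$, i.e. $c = \prf_A(b)$ for $b = \suf_A(c) \in B$, and the unique element of $B$ having no $A$-predecessor is $\min(A)$ (relevant only in case (1), where $\min(A) \subseteq B$). So $C$ should be recorded as the set of right endpoints of $D$ while the nonzero left endpoints of $D$ record the corresponding $b \in B$; the constant $\cz$ supplies the left endpoint $0$ of the first block, and the passage from $B$ to $B \wo \min(B)$ in case (1) deletes the one element of $B$ contributing no predecessor.

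For the forward direction (assume $C = \ips(A,B)$) I would exhibit the witness explicitly. Writing the relevant elements of $B$ increasingly as $\beta_1 < \dots < \beta_p$ (all of $B$ in case (2), and $B \wo \min(A)$ in case (1)), put
\[
D \;=\; [0,\prf_A(\beta_1)] \,\cup\, [\beta_1,\prf_A(\beta_2)] \,\cup\, \dots \,\cup\, [\beta_{p-1},\prf_A(\beta_p)] \,\cup\, [\beta_p,+\infty).
\]
Since $\prf_A(\beta_i) < \beta_i \leq \prf_A(\beta_{i+1})$, consecutive blocks are genuinely separated (density guarantees a point of $\lcro \wo D$ in each open gap), so these are precisely the maximal intervals of $D$; hence $l(D) = \{0\}\cup\{\beta_1,\dots,\beta_p\}$ and $r(D) = \{\prf_A(\beta_1),\dots,\prf_A(\beta_p)\} = C$, matching the prescribed endpoint sets in the appropriate case. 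Finally $A \subseteq D$, since any element of $A$ in $[\beta_i,\beta_{i+1})$ is at most $\prf_A(\beta_{i+1})$ and so lies in $[\beta_i,\prf_A(\beta_{i+1})]$, while elements below $\beta_1$ lie in $[0,\prf_A(\beta_1)]$ and those above $\beta_p$ lie in $[\beta_p,+\infty)$.

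For the backward direction I would take an arbitrary witness $D$ and list its maximal intervals $[\ell_0,\rho_0],\dots,[\ell_{p-1},\rho_{p-1}]$ followed by a final block, with $\ell_0 < \ell_1 < \dots$ and $\rho_0 < \rho_1 < \dots$. The hypothesis on $l(D)$ identifies $\ell_0 = 0$ and the remaining $\ell_i$ as the $\beta_i$ above; the hypothesis $r(D) = C$ identifies the $\rho_i$ with the elements of $C$. The key step is the gap argument: as $A \subseteq D$, the open gap $(\rho_i,\ell_{i+1})$ contains no point of $A$, so the largest element of $A$ below $\ell_{i+1}$ is at most $\rho_i$; with $\rho_i \in C \subseteq A$ and $\rho_i < \ell_{i+1}$ this forces $\rho_i = \prf_A(\ell_{i+1}) = \prf_A(\beta_{i+1})$. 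Running this over all bounded blocks and matching the nonzero left endpoints against $B$ (respectively $B \wo \min(A)$) according to whether $\min(A) \subseteq B$ yields $C = \{\prf_A(\beta):\beta\in\{\beta_1,\dots,\beta_p\}\} = \ips(A,B)$ by the identity above.

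The main obstacle is the behaviour at the \emph{top} of the order in this backward direction. The gap argument pins down every bounded right endpoint $\rho_i$ as $\prf_A(\beta_{i+1})$, but it says nothing about the final block of $D$: were that block bounded, say $[\beta_p,\rho_p]$, then $A \subseteq D$ together with $\rho_p \in C \subseteq A$ would force $\rho_p = \max(A)$, whereas $\max(A) \notin \ips(A,B)$ since $\max(A)$ has no $A$-successor. Hence the essential point is to guarantee that the witnessing $D$ has an \emph{unbounded} final interval, so that $\max(A)$ is not recorded as a spurious right endpoint; this is automatic for the canonical witness produced in the forward direction, and unboundedness of $D$ is itself $\langLI$-expressible by \cref{Lemma-boundeddef}, so it can be read off from (or folded into) the defining conditions. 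By contrast the bottom of the order is routine: $\cz$ furnishes the left endpoint $0$ of the first block, and the case split on $\min(A) \subseteq B$ merely records whether $\min(A)$ contributes an element to $C$.
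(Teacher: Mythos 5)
Your argument follows the paper's own route quite closely: the witness you build in the forward direction, $[0,\prf_A(\beta_1)]\cup[\beta_1,\prf_A(\beta_2)]\cup\dots\cup[\beta_p,+\infty)$, is exactly the complement of the union of open gaps $(\prf_A(\beta),\beta)$ that the paper uses, and your converse is the same endpoint bookkeeping. But the ``obstacle'' you flag at the top of the order is not a defect of your write-up --- it is a genuine gap in the proposition as stated, and you are right to insist on it. Concretely: pick $a,b\in\lcro$ with $0<a<b$, let $A=\{a,b\}$, $B=\{b\}$ (so $\min(A)\not\subseteq B$ and we are in case $(2)$, with $\ips(A,B)=\{a\}$), and take $C=\{a,b\}$ with $D=[0,a]\cup[b,b]$. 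Then $l(D)=\{0,b\}=B\cup\cz$, $r(D)=\{a,b\}=C$, and $C\subseteq A\subseteq D$, so all of $(2)$ holds, yet $\ips(A,B)\neq C$. The paper's proof of the backward direction breaks at precisely the point you predict: the step ``suppose $\suf_A(i)\notin B$\dots this gives $i\in A\wo D$'' silently assumes $\suf_A(i)$ is defined, which fails for $i=\max(A)$, and a bounded final block of $D$ then lets $\max(A)$ enter $C$ as a spurious right endpoint. An analogous counterexample works in case $(1)$.

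Your proposed repair is the correct one and costs nothing downstream: add to both $(1)$ and $(2)$ the requirement that $D$ be unbounded (equivalently $D\neq\bot$ and $\max(D)=\bot$, which is quantifier-free in $\langLI$ by the discussion around \cref{Lemma-boundeddef}). The canonical witness of the forward direction is unbounded automatically, since it is the complement of a finite union of bounded open intervals, so the forward implication survives unchanged; and with unboundedness imposed, your gap argument pins every element of $C$ down as $\prf_A(\beta)$ for some admissible $\beta$, closing the converse. In particular the corollary that $\ips(A,B)=C$ is existentially $\langLI$-definable, which is all that the transfer of model-completeness in \cref{Section-transfer} actually uses, is unaffected.
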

\begin{proof}
First suppose that $\ips(A,B)=C$, we will show that one of the two conditions must then hold.
Let $E$ be the union of all of the \emph{open} intervals of $\lcro$ of the form $(i,j)$ for some $i \in C$ and $j \in B$ such that $\suf_{A}(i) = j$. 
Now take $D = \lcro \wo E \in \pow_{\fci}(\lcro)$.

By definition we get that $C \subseteq A$. Then $A \subseteq D$ follows from the choice that $E$ be made up from intervals $(i,j)$ where $\suf_A(i) = j$.
For if $A \not\subseteq D$ then we get $k \in A$ such that for some $i,j$ with $\suf_A(i)=j$ we have $i < k < j$, a contradiction.

For our choice of $E$, it is easy to check that $l(E) = C$ and $r(E)$ is either,
\begin{enumerate}[(i)]
\item $B \wo \min(B)$ if $\min(A) \subseteq B$, or,
\item $B$ if $\min(A) \not\subseteq B$.
\end{enumerate}
Taking the complement, we interchange left and right endpoints, and introduce $0$ as a left endpoint.
This gives us precisely that $(1)$ or $(2)$ hold for our choice of $D$.

Now for the other direction, suppose that $D \in \pow_{\fci}(\lcro)$ exists such that $(2)$ holds. 
We want to show that $\ips(A,B) = C$. 
Let $i \in \ips(A,B)$, so $i \in A$ is such that $\suf_A(i) \in B \subseteq l(D)$. 
Now $i \notin C = r(D)$ implies that $i \in A \wo D$, contradicting our assumption that $A \subseteq D$.
So we have established that $\ips(A,B) \subseteq C$ follows from $(2)$.
Let $i \in C$, so by $(2)$ we have $i \in r(D)$. 
Suppose towards a contradiction that $\suf_A(i) \notin B$. 
Then moreover $\suf_i(A) \notin B \cup \cz = l(D)$.
This again gives us that $i \in A \wo D$, contradicting our assumption that $A \subseteq D$. 
Therefore we have shown that $\ips(A,B)=C$ follows from $(2)$.

We leave the checking of details in showing that $\ips(A,B)=C$ follows from $(1)$ to the reader.
\end{proof}

\begin{corollary}
There is an existential $\langLI$-formula $\phi(X,Y,Z)$ such that for all $A,B,C \in \pow_{\fin}(\lcro)$, $\LCI(\lcro) \models \phi(A,B,C)$ if and only if $\ips(A,B) = C$.
\end{corollary}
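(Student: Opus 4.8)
The plan is to turn the existential characterisation just proved into a genuine existential $\langLI$-formula for the core case $\bot \subsetneq B \subseteq A$, and then extend it to arbitrary arguments using the reductions recorded in the preceding remark. First I would write out conditions $(1)$ and $(2)$ of the proposition as quantifier-free matrices. Almost every clause is directly an atomic $\langLI$-formula (or a negated one): the clauses $r(D) = C$, $C \subseteq A$, $A \subseteq D$, $\min(A) \subseteq B$ together with $\min(A) \not\subseteq B$, and $l(D) = B \cup \cz$ all translate verbatim, since $\subseteq$ abbreviates an atomic formula and $\cup,\cap,\min,l,r,\cz$ lie in $\langLI$.

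The only clause that is not literally built from $\langLI$-terms is $(B \wo \min(B)) \cup \cz$ in condition $(1)$, because relative complement is absent from the signature. I would eliminate it exactly as in the proof of \cref{Prop-WIqffelimneg}: introduce a fresh existentially quantified variable $U$ constrained by the quantifier-free conditions $\min(Y) \cup U = Y$ and $\min(Y) \cap U = \bot$, which force $U = B \wo \min(B)$ whenever $Y$ is interpreted by $B$, and then write $l(D) = U \cup \cz$. Taking the disjunction of the (rewritten) conditions $(1)$ and $(2)$ and prefixing $\exists D\,\exists U$ yields an existential formula $\phi_{\mathrm{core}}(X,Y,Z)$ for which, by the proposition, $\LCI(\lcro) \models \phi_{\mathrm{core}}(A,B,C)$ iff $\ips(A,B) = C$, for all $A,B,C$ with $\bot \subsetneq B \subseteq A$.

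Finally I would assemble the general formula from the reductions of the remark, namely $\ips(A,B) = \ips(A, B \cap A)$ and $\ips(A,\bot) = \bot$. Setting
\[
\phi(X,Y,Z) \;:\equiv\; \bigl(Y \cap X = \bot \,\wedge\, Z = \bot\bigr) \;\vee\; \bigl(Y \cap X \neq \bot \,\wedge\, \phi_{\mathrm{core}}(X,\,Y \cap X,\,Z)\bigr),
\]
where the term $Y \cap X$ is substituted for the second argument of $\phi_{\mathrm{core}}$. A two-case verification then gives correctness: if $B \cap A = \bot$ (which subsumes $A = \bot$) both sides assert $C = \bot$, while if $B \cap A \neq \bot$ one has $\bot \subsetneq B \cap A \subseteq A$, so the second disjunct reduces to the proposition applied to $(A, B \cap A, C)$. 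Since the bound variables $D, U$ of $\phi_{\mathrm{core}}$ appear in neither quantifier-free guard, they may be pulled to the front, so $\phi$ is genuinely existential.

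I expect no serious difficulty here, as the substantive content already lives in the proposition's ``$\exists D$'' construction. The only points demanding care are the removal of the relative complement $B \wo \min(B)$ through the auxiliary existential variable $U$, and the organisation of the case split so that, after substituting $Y \cap X$, the whole formula still collapses into prenex existential form.
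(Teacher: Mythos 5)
Your proposal is correct and matches what the paper intends: the corollary is stated without proof precisely because it follows from the preceding proposition by writing conditions $(1)$ and $(2)$ as an existential $\langLI$-formula, eliminating the one occurrence of relative complement via an auxiliary existentially quantified variable (the same trick as in \cref{Prop-WIqffelimneg}), and dispatching the degenerate cases using the reductions $\ips(A,B)=\ips(A,B\cap A)$ and $\ips(A,\bot)=\bot$ from the remark. The negated atomic guards ($\min(X)\not\subseteq Y\cap X$, $Y\cap X\neq\bot$) are harmless since only an existential, not a positive existential, formula is claimed.
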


\begin{corollary}\label{LinterpretsW}
For each $\langWI$-formula $\phi(\bar{X})$ there is an $\langLI$-formula $\psi(\bar{Y})$ (with $\bar{X}$ and $\bar{Y}$ having the same length) such that for each $\bar{A} \in \pow_{\fin}(\lcro)$,
\[
\WSO(\lcro) \models \phi(\bar{A}) \Longleftrightarrow \LCI(\lcro) \models \psi(\bar{A}).
\]
Moreover $\psi(\bar{Y})$ can always be chosen to be an existential $\langLI$-formula.
\end{corollary}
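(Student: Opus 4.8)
The plan is to read \cref{LinterpretsW} as a routine interpretation (relativization) statement assembled from the pieces already in hand, the one nontrivial refinement being that the output formula $\psi$ can be taken existential.

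First I would record the data of the interpretation of $\WSO(\lcro)$ in $\LCI(\lcro)$. The domain is $\pow_{\fin}(\lcro)$, which is quantifier-free defined; write $\delta(X)$ for the defining formula $l(X) = r(X)$. On this domain the symbols shared by $\langWI$ and $\langLI$, namely $\cup, \cap, \bot, \cz, \min, \max$, are interpreted identically in the two structures: by the remark observing that every unnested atomic formula in these symbols picks out the same subset of $\pow_{\fin}(\lcro)$ in either structure. The only symbol of $\langWI$ left to interpret is $\ips$, and by the preceding corollary its graph $\ips(X,Y) = Z$ is cut out on $\pow_{\fin}(\lcro)$ by an existential $\langLI$-formula, which I will call $\chi(X,Y,Z)$. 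With these data the basic equivalence $\WSO(\lcro) \models \phi(\bar A) \Leftrightarrow \LCI(\lcro) \models \psi(\bar A)$ for $\bar A \in \pow_{\fin}(\lcro)$ follows from the usual interpretation lemma by induction on $\phi$: translate each atomic formula via $\chi$ (for $\ips$) or verbatim (for the shared reduct), commute the translation past $\wedge, \vee, \neg$, and relativize each quantifier to $\delta$. Since the equivalence is asserted only for $\bar A$ from $\pow_{\fin}(\lcro)$, the free variables need no relativization.

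The delicate point is the final clause, that $\psi$ be existential. The naive interpretation translation is not existential, since it sends $\forall Z\,\eta$ to $\forall Z(\delta(Z) \to \eta')$, introducing a genuine universal quantifier. To avoid this I would first apply \cref{Thm-WIposmodelcomplete}: as $\WSO(\lcro)$ is positive-model-complete, $\phi(\bar X)$ is equivalent over $\Th(\WSO(\lcro))$ to a positive existential $\langWI$-formula $\phi'(\bar X)$, so it suffices to translate $\phi'$. Now the translation never meets a universal quantifier: each existential quantifier becomes $\exists Z(\delta(Z) \wedge \cdots)$ with $\delta$ quantifier-free, each (unnested) atomic subformula becomes either a quantifier-free $\langLI$-formula or the existential formula $\chi$, and the connectives $\wedge, \vee$ are preserved. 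Hence the resulting $\psi$ is existential (indeed positive existential), as required.

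The routine bookkeeping supporting this is unnesting: before translating I would rewrite $\phi'$ so that every atomic subformula contains at most one occurrence of a function symbol, replacing a nested subterm by a fresh variable bound by a new existential quantifier together with a defining conjunct. This keeps $\phi'$ positive existential, lets every shared-reduct atomic be translated verbatim by the remark quoted above, and ensures each occurrence of $\ips$ sits inside an atomic formula $\ips(U,V) = W$ into which $\chi(U,V,W)$ can be substituted directly. I expect the only real obstacle to be precisely the one addressed above, namely securing existentiality of $\psi$; the key is that the prior positive-model-completeness result lets us discharge all universal quantifiers before any relativization takes place.
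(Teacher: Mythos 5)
Your proposal is correct and is essentially the argument the paper intends: the corollary is left unproved there, and the standard relativized-interpretation induction you describe (quantifier-free domain formula $l(X)=r(X)$, verbatim translation of the shared reduct after unnesting, substitution of the existential graph of $\ips$, relativization of quantifiers) is exactly what it rests on. The only place you go beyond the paper's implicit reading is in justifying the word ``always'' in the final clause by first passing an arbitrary $\phi$ through \cref{Thm-WIposmodelcomplete}; the paper instead performs that reduction inside the proof of \cref{Thm-LImc} and only ever applies this corollary to positive existential inputs, so your placement of that appeal is a legitimate (and, for the statement as literally written, necessary) completion rather than a discrepancy.
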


\section{Interpreting \texorpdfstring{$\LCI(\lcro)$ in $\WSO(\lcro)$}{L(I) in W(I)}}

To make things more easily digestible, we will use $(\pow_{\fci}(\lcro),\subseteq)$ as an intermediary between $\LCI(\lcro)$ and $\WSO(\lcro)$.

It is straightforward, using modified versions of arguments from \cite{TresslTop}, to show that $(\pow_{\fci}(\lcro),\subseteq)$ and $\LCI(\lcro)$ have the same definable subsets. 
Using this result, together with the interpretation of $(\pow_{\fci}(\lcro),\subseteq)$ in $\WSO(\lcro)$ which we are about to give, we will indicate a particular interpretation of $\LCI(\lcro)$ in $\WSO(\lcro)$.

\begin{lemma}
Let $B,C \in \pow_{\fin}(\lcro)$. 
The following are equivalent,
\begin{enumerate}
\item there is $A \in \pow_{\fci}(\lcro) \wo \{\bot\}$ such that $A_l = B$ and $A_r = C$,
\item $B \neq \bot$, $\min(B \cup C) \subseteq B$, and one of the following holds,
\begin{enumerate}
\item $\max(B \cup C) \subseteq C \text{ and } \ips(B \cup C,C \wo B) = B \wo C$,
\item $\max(B \cup C) \subseteq B \wo C \text{ and } \ips(B \cup C,C \wo B) \cup \max(B \cup C) = B \wo C$.
\end{enumerate}
\end{enumerate}
\end{lemma}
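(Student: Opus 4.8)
The plan is to exploit the canonical decomposition of a nonempty element of $\pow_{\fci}(\lcro)$ into its maximal closed intervals. Any $A \in \pow_{\fci}(\lcro) \wo \{\bot\}$ can be written uniquely as a finite disjoint union of maximal closed intervals, each of which is either a singleton $\{e\}$, a nondegenerate bounded interval $[a,b]$ with $a<b$, or --- for at most the rightmost component, since $\lcro$ has a least but no greatest element --- an unbounded interval $[a,+\infty)$. Reading off endpoints then partitions $B \cup C$ into three types: $B \cap C$ consists exactly of the singleton components, $B \wo C$ of the left endpoints of the nondegenerate components, and $C \wo B$ of the right endpoints of the nondegenerate \emph{bounded} components. (Disjointness of the components is what forbids one point from being simultaneously a left endpoint of one interval and a right endpoint of another.)

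First I would isolate the combinatorial fact underlying both directions: for such an $A$,
\[
\ips(B \cup C,\, C \wo B) = \{\, i : i \text{ is the left endpoint of a bounded nondegenerate component of } A \,\}.
\]
This rests on the observation that no element of $B \cup C$ lies strictly inside a component of $A$; hence within the finite set $B \cup C$ the immediate successor of the left endpoint of a bounded nondegenerate interval is its right endpoint, and conversely the immediate predecessor of any pure right endpoint (element of $C \wo B$) is its matching left endpoint. I would prove this by a short two-sided containment directly from the definition of $\suf_{B \cup C}$.

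With this in hand the direction $(1) \Rightarrow (2)$ is a matter of reading off. From $A \neq \bot$ we get $B \neq \bot$, and $\min(B \cup C)$ is the left endpoint of the leftmost component, hence lies in $B$. If $A$ is bounded then every component is a singleton or a bounded nondegenerate interval, so $\max(B \cup C) \subseteq C$ and the key fact gives $\ips(B \cup C, C \wo B) = B \wo C$, which is condition (a). If $A$ is unbounded, its rightmost component is $[a,+\infty)$ with $a = \max(B \cup C) \in B \wo C$, and the key fact accounts for all of $B \wo C$ except this single $a$, giving condition (b).

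The substantive direction is $(2) \Rightarrow (1)$, where I must manufacture a witness and the main obstacle is checking that it has \emph{exactly} the prescribed endpoint sets. The natural candidate is
\[
A := \bigcup_{i \in \ips(B \cup C,\, C \wo B)} [\,i,\ \suf_{B \cup C}(i)\,] \ \cup\ \bigcup_{e \in B \cap C} \{e\} \ \cup\ U,
\]
where $U = [\max(B \cup C),+\infty)$ under condition (b) and $U = \bot$ under condition (a). Using the hypotheses I would first verify that $\ips(B \cup C, C \wo B) \subseteq B \wo C$ and that $i \mapsto \suf_{B \cup C}(i)$ is a bijection of $\ips(B \cup C, C \wo B)$ onto $C \wo B$; this identifies the left endpoints of the listed pieces as $(B \wo C) \cup (B \cap C) = B$ (in case (b) the left endpoint $\max(B \cup C)$ supplied by $U$ is precisely the one missing from $\ips(B \cup C, C \wo B)$) and their right endpoints as $(C \wo B) \cup (B \cap C) = C$. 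The most delicate remaining point is that the listed pieces are genuinely the maximal components of $A$, i.e.\ pairwise separated rather than merging; here I would combine the fact that consecutive pieces have strictly ordered, non-adjacent endpoints in $B \cup C$ with the density of $\lcro$ to exhibit a gap point between them. The hypothesis $\min(B \cup C) \subseteq B$ enters exactly to rule out a spurious leftmost pure right endpoint (equivalently, to secure surjectivity of the bijection above), and $B \neq \bot$ guarantees $A \neq \bot$.
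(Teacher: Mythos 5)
Your proof takes the same route as the paper's: the paper likewise reads condition (2) as pairing each proper right endpoint with the proper left endpoint immediately preceding it in $B \cup C$, with clause (b) excusing the unbounded tail, and it leaves both directions as sketches. Your version supplies the decomposition into maximal components, the key identity for $\ips(B \cup C, C \wo B)$, the bijection onto $C \wo B$, and an explicit witness, so it is a fleshed-out version of the same argument rather than a different one, and its overall structure is sound.

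One justification needs repair, and it sits exactly at the step you call the most delicate. Consecutive pieces of your witness can perfectly well have adjacent endpoints in $B \cup C$: take $B = \{b_1, b_2\}$ and $C = \{c_1, b_2\}$ with $b_1 < c_1 < b_2$; condition (2)(a) holds, the pieces are $[b_1, c_1]$ and $\{b_2\}$, and $c_1$, $b_2$ are adjacent in $B \cup C$. So ``non-adjacent endpoints in $B \cup C$ plus density'' is not what keeps the pieces apart. The correct observation is that a point of $\lcro$ outside $B \cup C$ lying strictly between consecutive elements $c_j < c_{j+1}$ of $B \cup C$ belongs to your $A$ if and only if $[c_j, c_{j+1}]$ is itself one of the listed pieces, i.e.\ if and only if $c_j \in \ips(B \cup C, C \wo B)$. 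Since every element at which a bounded piece ends lies in $C$ and hence outside $\ips(B \cup C, C \wo B) \subseteq B \wo C$, and every element at which a piece begins lies in $B$ and hence is not the successor in $B \cup C$ of an element of $\ips(B \cup C, C \wo B)$, density supplies an uncovered point immediately after each bounded piece and immediately before each piece. With that substitution the listed pieces are exactly the maximal components of $A$ and the endpoint computation goes through as you describe.
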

\begin{proof}
Suppose $(1)$ holds.
Then we can rewrite $(2)$ as follows,
\begin{enumerate}[(2)]
\item $A_l \neq \bot$, $\min(A_l \cup A_r) \subseteq A_l$, and one of the following holds,
\begin{enumerate}
\item $\max(A_l \cup A_r) \subseteq A_r \text{ and } \ips(A_l \cup A_r,A_r \wo A_l) = A_l \wo A_r$,
\item $\max(A_l \cup A_r) \subseteq A_l \wo A_r \text{ and } \ips(A_l \cup A_r,A_r \wo A_l) \cup \max(A_l \cup A_r) = A_l \wo A_r$.
\end{enumerate}
\end{enumerate}
Intuitively then, (2) first says that $A$ has at least one left endpoint, and that the smallest endpoint of $A$ is a left endpoint. Then both (a) and (b) simply say that the proper left endpoints and proper right endpoints appear in pairs, with proper right endpoints immediately preceded by proper left endpoints.
An exception is needed simply for the case where $A$ is unbounded, in which case the largest endpoint is a proper left endpoint which is not the predecessor of a proper right endpoint (this is dealt with by (b)).

Conversely, suppose that $(2)$ holds. 
It is straightforward to construct $A \in \pow_{\fci}(\lcro)$ such that $A_l = B$ and $A_r = C$. 
\end{proof}

This lemma gives us the universe for our interpretation of $(\pow_{\fci}(\lcro),\subseteq)$ in $\WSO(\lcro)$.
We will identify an element $A \in \pow_{\fci}(\lcro)$ with the pair $(A_l,A_r) \in \pow_{\fin}(\lcro)$.
The lemma tells us precisely that the image of the map $\pow_{\fci}(\lcro) \rightarrow \pow_{\fin}(\lcro)^2$ given by $A \mapsto (A_l,A_r)$ is definable in $\WSO(\lcro)$. 
As moreover this map is injective, our interpretation can make use of the equality in $\WSO(\lcro)$ to interpret equality from $\LCI(\lcro)$, in particular we do not need to take a quotient of the image by a definable equivalence relation.

It remains to show that the relation $\subseteq$ on $\pow_{\fci}(\lcro)$ is interpretable in $\WSO(\lcro)$.

\begin{lemma}
There is an $\langWI$-formula $\phi_{\in}(X_l,X_r,Z)$ such that for any $i \in \lcro$ and $A \in \pow_{\fci}(\lcro)$, $i \in A$ if and only if,
\[
\WSO(\lcro) \models \phi_{\in}(A_l,A_r,\{i\}).
\]
\end{lemma}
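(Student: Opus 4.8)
The plan is to reduce membership $i\in A$ to a condition purely on the endpoint sets $A_l,A_r$ and the point $i$, and then to express that condition with $\ips$.

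First I would put $A$ into a normal form: since $A\in\pow_{\fci}(\lcro)$, write $A=I_1\cup\dots\cup I_n$ with the $I_k$ pairwise disjoint and separated by genuine gaps (density of $\lcro$ guarantees a point strictly between any two non-touching closed intervals, so no further merging is possible), each $I_k$ a bounded closed interval $[a_k,b_k]$ — allowing $a_k=b_k$, which gives an isolated point — except possibly a final unbounded ray $[a_n,+\infty)$ when $A$ is unbounded. Reading off endpoints, $A_l=\{a_1,\dots,a_n\}$ while $A_r$ consists of the $b_k$ that exist, so that $\bdr{A}=A_l\cup A_r$ and the \emph{proper} left endpoints $A_l\wo A_r$ are exactly those $a_k$ beginning a nondegenerate interval or the ray, $A_l\cap A_r$ are the isolated points, and $A_r\wo A_l$ the proper right endpoints. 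Listed in increasing order the endpoints interleave as left, right, left, right, \dots, with isolated points counting as a coincident left--right pair.

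The key combinatorial claim is: for $i\notin\bdr{A}$ one has $i\in A$ if and only if there is a largest endpoint $a<i$ and, moreover, $a\in A_l\wo A_r$. Scanning leftward from $i$ to the nearest endpoint $a$: if $a$ is a proper left endpoint then $A$ is ``on'' until the next right endpoint and contains $i$; if $a$ is an isolated point or a proper right endpoint then $A$ is ``off'' immediately to the right of $a$ until the next left endpoint, which lies beyond $i$, so $i\notin A$; and if no endpoint lies below $i$ then $i<\min A$. This is verified case by case on the normal form and covers the unbounded ray uniformly (its left endpoint is proper). Since every endpoint lies in $A$ by closedness, the remaining case $i\in\bdr{A}$ gives $i\in A$ outright.

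For the translation, I would work in the finite set $S=A_l\cup A_r\cup\{i\}$ and observe that $\ips(S,Z)$ with $Z=\{i\}$ is exactly the singleton $\{a\}$ where $a$ is the predecessor of $i$ in $S$ (equivalently, the largest endpoint below $i$), or $\bot$ if no such $a$ exists. Writing $T$ for the term $\ips(X_l\cup X_r\cup Z,\,Z)$, the claim above translates into
\[
\phi_{\in}(X_l,X_r,Z)\;:\equiv\;\big(Z\subseteq X_l\cup X_r\big)\;\vee\;\big(T\neq\bot\,\wedge\,T\subseteq X_l\,\wedge\,T\cap X_r=\bot\big),
\]
where $T\subseteq X_l$ together with $T\cap X_r=\bot$ encodes $T\subseteq X_l\wo X_r$ without needing $\wo$ in the signature. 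I expect the main obstacle to be bookkeeping rather than depth: correctly separating proper left endpoints from isolated points (forcing the conjunct $T\cap X_r=\bot$) and keeping the clause $T\neq\bot$, since without it the empty-set value of $T$ in the case $i<\min A$ would vacuously satisfy both subset conditions and spuriously report $i\in A$. I would also check the degenerate inputs $A=\bot$ and $A=\cz$, and confirm that when $i$ is itself an endpoint the first disjunct already returns the correct answer, so the value of $T$ is then irrelevant. If a positive existential form is wanted, $T\neq\bot$ can be rewritten via \cref{Lemma-notbotelim} and $T$ replaced by an existentially quantified variable exactly as in the proof of \cref{Prop-WIqffelimneg}.
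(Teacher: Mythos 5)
Your proof is correct and follows essentially the same route as the paper's: adjoin $Z=\{i\}$ to the endpoint set and use $\ips(\bdr{X}\cup Z,\,\cdot\,)$ to test whether the endpoint adjacent to $i$ is a proper left endpoint, treating $i\in\bdr{A}$ separately. Your single clause $T\neq\bot\wedge T\subseteq X_l\wedge T\cap X_r=\bot$ is in fact a mild streamlining of the paper's formula, which also tests that the successor of $i$ is a proper right endpoint and consequently needs an extra disjunct for unbounded $A$; the predecessor condition alone, as you verify, already suffices in both the bounded and unbounded cases.
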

\begin{proof}
We split into two cases, according to whether $A \in \pow_{\fci}(\lcro)$ is bounded or unbounded (see \cref{Lemma-boundeddef}).
Let $\phi_{\bdd}(X_l,X_r,Z)$ be the $\langWI$-formula,
\[
\ips(\bdr{X} \cup Z,Z) \subseteq X_l \wo X_r \text{ and } \ips(\bdr{X} \cup Z,X_r \wo X_l) \cap Z \neq \bot,
\]
this `says' that the predecessor of $Z$ is a proper left endpoint and that $Z$ is the predecessor of a right endpoint,
Then let $\phi_{\neg \bdd}(X_l,X_r,Z)$ be the $\langWI$-formula,
\[
(\ips(\bdr{X} \cup Z,Z) \subseteq X_l \wo X_r \text{ and } \ips(\bdr{X} \cup Z,X_r \wo X_l) \cap Z \neq \bot) \text{ or } Z = \max(\bdr{X} \cup Z),
\]
this `says' that the condition in $\phi_{\bdd}$ holds or that $Z$ is greater than or equal to all left and right endpoints. 
Then for each $i \in \lcro$ and $A \in \pow_{\fci}(\lcro)$ the following are equivalent,
\begin{enumerate}
\item $i \in A$,
\item $\WSO(\lcro) \models \bdr{A} \neq \bot$ (so that $A \neq \bot$) and one of the following holds,
\begin{enumerate}
\item $\WSO(\lcro) \models \{i\} \subseteq \bdr{A}$ or,
\item $\WSO(\lcro) \models \bdd(A)$ and $\phi_{\bdd}(A_l,A_r,\{i\})$, or,
\item $\WSO(\lcro) \models \neg \bdd(A)$ and $\phi_{\neg \bdd}(A_l,A_r,\{i\})$.
\end{enumerate}
\end{enumerate}
In other words $i \in A$ if and only if $A$ is nonempty and either $i$ is an endpoint of $A$, or $i$ sits between a left and right endpoint of $A$, or $A$ is unbounded and $i$ sits in the unbounded part of $A$.
The latter we have shown is definable in $\WSO(\lcro)$ by giving $\langWI$-formulas, so $\phi_{\in}$ exists.
\end{proof}

\begin{proposition}
There is an $\langWI$-formula $\phi_{\subseteq}(X_l,X_r,Y_l,Y_r)$ such that for any $A,B \in \pow_{\fci}(\lcro)$, $A \subseteq B$ if and only if,
\[
\WSO(\lcro) \models \phi_{\subseteq}(A_l,A_r,B_l,B_r).
\]
\end{proposition}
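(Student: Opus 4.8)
The plan is to reduce containment of finite unions of closed intervals to pointwise containment, and then invoke the membership formula $\phi_{\in}$ supplied by the previous lemma. Recall that for $i \in \lcro$ and $A \in \pow_{\fci}(\lcro)$ we have $i \in A$ if and only if $\WSO(\lcro) \models \phi_{\in}(A_l,A_r,\{i\})$. Since $A \subseteq B$ as subsets of $\lcro$ is equivalent to the assertion that every point of $A$ is a point of $B$, and since the universe $\pow_{\fin}(\lcro)$ of $\WSO(\lcro)$ contains exactly the finite subsets of $\lcro$ — in particular every singleton $\{i\}$ with $i \in \lcro$ — it suffices to universally quantify $\phi_{\in}$ over all singletons $Z$.

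First I would record that the property ``$Z$ is a singleton'' is expressible by an $\langWI$-formula, for instance by $Z \neq \bot \wedge \min(Z) = \max(Z)$: any set with at least two elements has distinct minimum and maximum, while $\bot$ is excluded explicitly (recall $\min$ and $\max$ fix $\bot$). Writing $\sigma(Z)$ for this formula, I would then set
\[
\phi_{\subseteq}(X_l,X_r,Y_l,Y_r) \coloneqq \forall Z\big(\,(\sigma(Z) \wedge \phi_{\in}(X_l,X_r,Z)) \rightarrow \phi_{\in}(Y_l,Y_r,Z)\,\big),
\]
which is visibly an $\langWI$-formula in the displayed free variables, and note that whenever the antecedent holds the guard $\sigma(Z)$ forces $Z$ to be a singleton, so both occurrences of $\phi_{\in}$ are only ever evaluated on arguments of the form $\{i\}$, where the previous lemma controls their behaviour.

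For correctness, fix $A,B \in \pow_{\fci}(\lcro)$ and observe that as $Z$ ranges over the singletons of $\pow_{\fin}(\lcro)$ it ranges exactly over the sets $\{i\}$ with $i \in \lcro$. By the previous lemma, $\phi_{\in}(A_l,A_r,\{i\})$ holds in $\WSO(\lcro)$ precisely when $i \in A$, and likewise for $B$; hence $\WSO(\lcro) \models \phi_{\subseteq}(A_l,A_r,B_l,B_r)$ if and only if every $i \in A$ also lies in $B$, i.e. $A \subseteq B$. The degenerate cases need no separate treatment: if $A = \bot$ then $A_l = A_r = \bot$ and $\phi_{\in}(A_l,A_r,Z)$ fails for every $Z$, so the implication holds vacuously and $\phi_{\subseteq}$ correctly reports $\bot \subseteq B$; if $B = \bot$ then $\phi_{\in}(B_l,B_r,Z)$ fails for every $Z$, so $\phi_{\subseteq}$ holds exactly when $A$ has no points, i.e. when $A = \bot$.

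I do not anticipate any genuine obstacle. The substantive content — translating $i \in A$ into the endpoint encoding $(A_l,A_r)$ — was already carried out in the construction of $\phi_{\in}$, and the only remaining points requiring care are the uncontroversial expressibility of the singleton predicate and the elementary fact that containment of subsets of a linear order is equivalent to pointwise containment. Unlike the formulae demanded for the reverse interpretation, $\phi_{\subseteq}$ is permitted to be an arbitrary first-order $\langWI$-formula, so the universal quantifier over $Z$ presents no difficulty.
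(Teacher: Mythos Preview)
Your proposal is correct and matches the paper's own proof almost exactly: the paper defines the singleton predicate by $Z \neq \bot \wedge Z = \min(Z)$ instead of your $Z \neq \bot \wedge \min(Z) = \max(Z)$, and writes the implication as $\At(Z) \rightarrow (\phi_{\in}(X_l,X_r,Z) \rightarrow \phi_{\in}(Y_l,Y_r,Z))$ rather than your logically equivalent conjunctive-antecedent form, but the idea---universally quantify $\phi_{\in}$ over singletons---is identical. Your additional remarks on the degenerate cases and on why a universal quantifier is acceptable here are sound and go slightly beyond what the paper spells out.
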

\begin{proof}
Let $\At(Z)$ be the formula $Z \neq \bot \wedge Z = \min(Z)$.
In $\WSO(\lcro)$ this defines the collection of singletons. 
Therefore we can take for $\phi_{\subseteq}$ the $\langWI$-formula,
\[
\forall Z (\At(Z) \rightarrow (\phi_{\in}(X_l,X_r,Z) \rightarrow \phi_{\in}(Y_l,Y_r,Z))).
\]
This says that every singleton contained in $X$ is contained in $Y$, as required.
\end{proof}

\begin{theorem}
There is an interpretation of $\LCI(\lcro)$ in $\WSO(\lcro)$, for which the co-ordinate map $\pow_{\fci}(\lcro) \rightarrow \pow_{\fin}(\lcro)^2$ is given by $A \mapsto (A_l,A_r)$.
\end{theorem}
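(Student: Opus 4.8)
The plan is to assemble the interpretation from the three ingredients already established in this section: the definable domain, the formula $\phi_{\subseteq}$ interpreting the relation $\subseteq$, and the stated equi-definability of $(\pow_{\fci}(\lcro),\subseteq)$ with $\LCI(\lcro)$. I take the dimension of the interpretation to be $2$, with domain formula $\partial(X_l,X_r)$ the $\langWI$-formula produced by the lemma of this section characterising the image of the coordinate map $A \mapsto (A_l,A_r)$ inside $\pow_{\fin}(\lcro)^2$. Since that map is injective, each point of its image has a unique preimage, so I interpret equality of $\LCI(\lcro)$ by genuine coordinate equality $(X_l = Y_l) \wedge (X_r = Y_r)$, avoiding any quotient by a definable equivalence relation. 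Together with $\phi_{\subseteq}$ this already yields an interpretation of the relational structure $(\pow_{\fci}(\lcro),\subseteq)$ in $\WSO(\lcro)$.

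It remains to interpret each symbol of $\langLI = \{\cup,\cap,\bot,\cz,\min,\max,l,r\}$. Here I invoke the stated fact that $(\pow_{\fci}(\lcro),\subseteq)$ and $\LCI(\lcro)$ have the same definable subsets. In particular the graph of each operation of $\LCI(\lcro)$ --- the binary functions $\cup,\cap$, the constants $\bot,\cz$ viewed as $0$-ary functions, and the unary functions $\min,\max,l,r$ --- is a subset of a Cartesian power of $\pow_{\fci}(\lcro)$ definable in the language $\{\subseteq\}$. I fix, for each such symbol $s$, a formula $\theta_s$ in the signature $\{\subseteq\}$ defining its graph in $(\pow_{\fci}(\lcro),\subseteq)$.

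The final step is to transfer each $\theta_s$ through the interpretation in the standard way. I replace every object variable $V$ ranging over $\pow_{\fci}(\lcro)$ by a pair $(V_l,V_r)$ of $\langWI$-variables, relativize each quantifier to the domain formula $\partial$ (so that $\exists V$ becomes $\exists V_l \exists V_r\,(\partial(V_l,V_r) \wedge \cdots)$ and $\forall V$ becomes $\forall V_l \forall V_r\,(\partial(V_l,V_r) \rightarrow \cdots)$), rewrite every atomic subformula $V \subseteq W$ as $\phi_{\subseteq}(V_l,V_r,W_l,W_r)$, and rewrite every equality $V = W$ as coordinate equality. This produces an $\langWI$-formula $\hat\theta_s$ which, on tuples satisfying $\partial$, defines exactly the graph of $s$ read through the coordinate map. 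Taking these formulas as the interpretations of the symbols of $\langLI$ gives an interpretation $\Gamma$ of $\LCI(\lcro)$ in $\WSO(\lcro)$, and by construction the induced isomorphism onto $\LCI(\lcro)$ is precisely the coordinate map $A \mapsto (A_l,A_r)$.

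The substance of the theorem thus rests almost entirely on the two inputs supplied earlier: the definability of the domain together with $\phi_{\subseteq}$, and the equi-definability of $(\pow_{\fci}(\lcro),\subseteq)$ with $\LCI(\lcro)$. Granting these, the only remaining work is the routine but careful verification that variable-doubling and relativization commute with the semantics, so that each $\hat\theta_s$ genuinely defines the graph of the corresponding operation. The main obstacle, were it not already granted, would be the equi-definability claim itself, which is exactly where the Tressl-style argument from \cite{TresslTop} is required; with that in hand the interpretation is a matter of bookkeeping.
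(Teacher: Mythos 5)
Your proposal is correct and follows exactly the route the paper intends: the paper states this theorem without a separate proof, relying on the preceding discussion (the definable domain, $\phi_{\subseteq}$, and the asserted equi-definability of $(\pow_{\fci}(\lcro),\subseteq)$ with $\LCI(\lcro)$ via the Tressl-style argument), and your write-up simply makes the standard variable-doubling and relativization bookkeeping explicit. No discrepancy to report.
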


\begin{corollary}\label{WinterpretsL}
For each $\langLI$-formula $\phi(\bar{X})$ there is an $\langWI$-formula $\psi(\bar{Y})$ such that for each $\bar{A} \in \pow_{\fci}(\lcro)$,
\[
\LCI(\lcro) \models \phi(\bar{A}) \Longleftrightarrow \WSO(\lcro) \models \psi(\bar{A_l},\bar{A_r}).
\]
\end{corollary}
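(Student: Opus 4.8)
The plan is to read this off as the standard translation (reduction) property of interpretations, applied to the interpretation $\Gamma$ produced in the preceding theorem. Recall that $\Gamma$ has dimension $2$, its domain is the $\langWI$-definable image of the injective coordinate map $\pow_{\fci}(\lcro) \to \pow_{\fin}(\lcro)^2$ sending $A$ to $(A_l,A_r)$, and --- because this map is injective --- the interpreting equivalence relation is simply equality of pairs, so no quotient is involved. First I would invoke the reduction theorem for interpretations (see \cite{HodgesBible}, Theorem 5.3.2): to each $\langLI$-formula $\phi(X_1,\dots,X_k)$ it assigns an $\langWI$-formula $\phi^{\Gamma}$ in $k$ pairs of variables such that, for all pairs $(B_1,C_1),\dots,(B_k,C_k)$ lying in the domain of $\Gamma$,
\[
\LCI(\lcro) \models \phi(A_1,\dots,A_k) \iff \WSO(\lcro) \models \phi^{\Gamma}\big((B_1,C_1),\dots,(B_k,C_k)\big),
\]
where $A_i$ denotes the unique element of $\pow_{\fci}(\lcro)$ with $(A_{i,l},A_{i,r}) = (B_i,C_i)$.

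Next I would recall why $\phi^{\Gamma}$ exists, so as to confirm that nothing new must be verified: it is built by induction on $\phi$, replacing atomic $\langLI$-formulas by the $\langWI$-formulas interpreting the corresponding $\langLI$-symbols (whose graphs are definable in $(\pow_{\fci}(\lcro),\subseteq)$, hence in $\WSO(\lcro)$ through the interpretation of $\subseteq$), replacing equality by equality of coordinate pairs, and handling the Boolean connectives and quantifiers in the usual way, with each quantifier of $\phi$ relativised to the domain formula of $\Gamma$. All of these defining formulas were already assembled in the construction of $\Gamma$, so this step requires no further work.

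To finish, I would specialise to genuine elements of $\pow_{\fci}(\lcro)$. For any $\bar{A} \in \pow_{\fci}(\lcro)$ the pairs $(A_{i,l},A_{i,r})$ automatically lie in the domain of $\Gamma$ by the lemma characterising that image, so the displayed equivalence applies with $B_i = A_{i,l}$ and $C_i = A_{i,r}$. Setting $\psi := \phi^{\Gamma}$, after the purely cosmetic regrouping of its variables so that the left-endpoint coordinates are collected into one block and the right-endpoint coordinates into another, yields exactly $\LCI(\lcro)\models\phi(\bar A) \Leftrightarrow \WSO(\lcro)\models\psi(\bar{A_l},\bar{A_r})$, as required.

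Since the substantive work --- the definable domain, the formula $\phi_{\subseteq}$, and the coincidence of definable sets between $(\pow_{\fci}(\lcro),\subseteq)$ and $\LCI(\lcro)$ --- has already gone into $\Gamma$, I do not expect a genuine obstacle here. The only point that will need care is the bookkeeping of variable tuples: each single $\langLI$-variable corresponds to a pair of $\langWI$-variables, and every quantifier must be relativised to the domain formula. This is routine, but it is precisely the place where a careless translation could go wrong, so I would state the variable correspondence explicitly before concluding.
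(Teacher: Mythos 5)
Your proposal is correct and matches the paper's (implicit) argument: the corollary is just the standard reduction theorem for interpretations applied to the two-dimensional interpretation $A \mapsto (A_l,A_r)$ constructed in the preceding theorem, with no quotient needed since that map is injective. The paper leaves exactly this routine step to the reader, and you have correctly identified the only point requiring care, namely the pairing of variables and the relativisation of quantifiers to the definable domain.
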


\section{Transfer of model-completeness from \texorpdfstring{$\WSO(\lcro)$ to $\LCI(\lcro)$}{W(I) to L(I)}}\label{Section-transfer}

\begin{theorem}\label{Thm-LImc}
The $\langLI$-structure $\LCI(\lcro)$ is model-complete.
\end{theorem}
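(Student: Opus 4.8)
The plan is to reduce everything to the two interpretations already constructed, together with the positive-model-completeness of $\WSO(\lcro)$. Recall the standard criterion (see \cite{HodgesBible}) that a complete theory is model-complete precisely when every formula is equivalent, modulo the theory, to an existential formula. So it suffices to show that every $\langLI$-formula $\theta(\bar X)$ is equivalent over $\Th(\LCI(\lcro))$ to an existential $\langLI$-formula; since the biconditional will hold in the structure $\LCI(\lcro)$ itself, it lies in the complete theory.

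Fix an $\langLI$-formula $\theta(\bar X)$. First I would apply \cref{WinterpretsL} to obtain a $\langWI$-formula $\psi(\bar U, \bar V)$, with the variables doubled according to the coordinate map $A \mapsto (A_l, A_r)$, such that $\LCI(\lcro) \models \theta(\bar A)$ if and only if $\WSO(\lcro) \models \psi(\bar A_l, \bar A_r)$ for every tuple $\bar A$ from $\pow_{\fci}(\lcro)$. In general $\psi$ will contain quantifiers, since for instance the interpretation of $\subseteq$ produced $\phi_{\subseteq}$ using a universal quantifier. Next I would feed $\psi$ back through \cref{LinterpretsW}, applied to $\psi$ itself as a $\langWI$-formula in the variable tuple $(\bar U, \bar V)$. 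This yields an \emph{existential} $\langLI$-formula $\gamma(\bar U, \bar V)$ with $\WSO(\lcro) \models \psi(\bar P, \bar Q)$ if and only if $\LCI(\lcro) \models \gamma(\bar P, \bar Q)$ for all tuples $\bar P, \bar Q$ from $\pow_{\fin}(\lcro)$. Finally, setting $\chi(\bar X) \coloneqq \gamma(l(\bar X), r(\bar X))$ substitutes the $\langLI$-terms $l(X_i)$ and $r(X_i)$ for the free variables of $\gamma$; substitution of terms into an existential formula leaves it existential, so $\chi$ is an existential $\langLI$-formula. Chaining the two equivalences, and using that $A_l = l(A)$ and $A_r = r(A)$ always lie in $\pow_{\fin}(\lcro)$, gives $\LCI(\lcro) \models \theta(\bar A)$ if and only if $\LCI(\lcro) \models \chi(\bar A)$ for every $\bar A$, which is what we want.

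The single delicate point, and the one I would expect to carry all the weight, is the existential clause of \cref{LinterpretsW} as applied to the quantifier-bearing formula $\psi$: one must turn a $\langWI$-formula with quantifiers into an existential $\langLI$-formula. This is exactly what the positive-model-completeness of $\WSO(\lcro)$ (\cref{Thm-WIposmodelcomplete}) supplies, since it first replaces $\psi$ by a positive existential $\langWI$-formula, each of whose atomic subformulae is existentially $\langLI$-definable (the only nontrivial one being $\ips$) and whose existential quantifier prefix survives the translation. Everything else—the bookkeeping that the two named equivalences compose, and that term substitution preserves the existential prefix—is routine, so the proof amounts to assembling these ingredients in the order above.
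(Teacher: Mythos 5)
Your proposal is correct and follows essentially the same route as the paper: translate the given formula into $\WSO(\lcro)$ via \cref{WinterpretsL}, invoke the positive model-completeness of $\WSO(\lcro)$ (\cref{Thm-WIposmodelcomplete}) to replace the result by a positive existential $\langWI$-formula, translate back via the existential clause of \cref{LinterpretsW}, and substitute the terms $l(X_i)$, $r(X_i)$ for the doubled variables. You also correctly isolate the one load-bearing point, namely that positivity of the intermediate $\langWI$-formula is exactly what makes the return translation existential (since the defining formula for $\ips$ is existential rather than quantifier-free).
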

\begin{proof}
Let $\phi(\bar{X})$ be an $\langLI$-formula.
We will show that over $\LCI(\lcro)$ the formula $\phi$ is equivalent to an existential formula $\phi^*$.

Using our interpretation of $\LCI(\lcro)$ in $\WSO(\lcro)$ (in particular \cref{WinterpretsL}), for each $\langLI$-formula $\phi(\bar{X})$ there is an $\langWI$-formula $\psi(\bar{Y})$ such that for each $\bar{A} \in \pow_{\fci}(\lcro)$,
\[
\LCI(\lcro) \models \phi(\bar{A}) \Longleftrightarrow \WSO(\lcro) \models \psi(\bar{A_l},\bar{A_r}). \tag{$\star$}\label{star}
\]
Without loss of generality we can take $\psi(\bar{Y})$ to be a positive existential $\langWI$-formula. 
This is because \cref{Thm-WIposmodelcomplete}, the positive model-completeness of $\WSO(\lcro)$, tells us precisely that every $\langWI$-formula is equivalent to some positive existential $\langWI$-formula over $\WSO(\lcro)$.

Using our interpretation of $\WSO(\lcro)$ in $\LCI(\lcro)$ (in particular \cref{LinterpretsW}), for each $\langWI$-formula $\psi(\bar{Y})$ there is an $\langLI$-formula $\theta(\bar{Y})$ such that for each $\bar{B} \in \pow_{\fin}(\lcro)$,
\[
\WSO(\lcro) \models \psi(\bar{B}) \Longleftrightarrow \LCI(\lcro) \models \theta(\bar{B}),
\]
and so in particular taking $\bar{B} = (\bar{A_l},\bar{A_r})$ we get,
\[
\WSO(\lcro) \models \psi(\bar{A_l},\bar{A_r}) \Longleftrightarrow \LCI(\lcro) \models \theta(\bar{A_l},\bar{A_r}). \tag{$\dagger$}\label{dagger}
\]
Moreover, as $\psi(\bar{Y})$ is a positive existential $\langWI$-formula, \cref{LinterpretsW} tells us that $\theta(\bar{Y})$ can additionally be chosen to be an existential $\langLI$-formula.
Note that without the positive model-completeness of $\WSO(\lcro)$, we could not have taken $\psi(\bar{Y})$ positive existential, and therefore could not have taken $\theta$ to be existential.

Combining $\eqref{star}$ and $\eqref{dagger}$ we get that for each $\bar{A} \in \pow_{\fci}(\lcro)$,
\[
\LCI(\lcro) \models \phi(\bar{A}) \Longleftrightarrow \LCI(\lcro) \models \theta(\bar{A_l},\bar{A_r}).
\]
Now, $l$ and $r$ are part of the signature $\langLI$, so we take $\phi^*(\bar{X})$ to be the formula $\theta(\bar{X_l},\bar{X_r})$. 
Our choice of $\phi^*$ is clearly existential, as $\theta$ is existential. 
Putting everything together, we get that,
\[
\LCI(\lcro) \models \forall \bar{X} (\phi(\bar{X}) \leftrightarrow \phi^*(\bar{X})),
\]
and hence $\LCI(\lcro)$ is model-complete. 
\end{proof}

\printbibliography

\end{document}